\newtheorem{thm}{Theorem}[section]
\newtheorem{lm}[thm]{Lemma}
\theoremstyle{definition}
\newtheorem*{df*}{Definition}
\theoremstyle{remark}
\newtheorem*{rem*}{Remark}
\numberwithin{equation}{section}
\newcommand{\la}{\lambda}
\newcommand{\al}{\alpha}
\newcommand{\cz}{Calder\'{o}n--Zygmund\ }
\newcommand{\Om}{\Omega}
\newcommand{\QQ}{[w]_{A_2}}
\def\cyr{\fontencoding{OT2}\fontfamily{wncyr}\selectfont}
\DeclareTextFontCommand{\textcyr}{\cyr}
\newcounter{vremennyj}
\begin{document}

\title[$A_2$ conjecture for shifts of complexity $0$ and $1$]{A sharp estimate of weighted dyadic shifts of complexity $0$ and $1$}
\author{Alexander Reznikov}
\address{Department of Mathematics, Michigan State University, East
Lansing, MI 48824, USA}
\author{Sergei Treil}
\address{Dept. of Mathematics, Brown University}
\author{Alexander Volberg}
\address{Department of Mathematics, Michigan State University, East
Lansing, MI 48824, USA}
\thanks{The second and the third authors are grateful to NSF for the support}
\subjclass{30E20, 47B37, 47B40, 30D55.}
\keywords{Key words: \cz operators, $A_2$ weights, $A_1$ weights, Carleson embedding theorem, Bellman function, dyadic shifts,
   nonhomogeneous Harmonic Analysis.}
\date{}

\begin{abstract}
A simple shortcut to proving sharp weighted estimates for the Martingale Transform and for the  Hilbert transform is presented. It is a unified proof for these both transforms.
\end{abstract}

\maketitle

\section{Introduction}
\label{intro}

Let $\sigma := w^{-1}$.

\bigskip

\noindent{\bf Notations.} We call a shift by $n$ generations, or $SH_n$  any sub-bilinear operator of the following form
$$
(SH_n\,f_1, f_2)=\sum_{J\subset I, |J|= 2^{-n}|I|}2^{-\frac{n}2} c_{IJ}|(f_1,h_I)||(f_2,h_J)|\,,
$$
where $|c_{IJ}|\le 1$.
\begin{thm}
\label{interiorshift1}
$$
(SH_1\,f_1, f_2)\le C\,\,\QQ\|f_1\|_{w}\|f_2\|_{\sigma}\,.
$$
\end{thm}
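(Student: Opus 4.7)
The plan is to cascade two Cauchy--Schwarz inequalities and then control each resulting factor by a weighted Haar embedding derived from the disbalanced Haar decomposition.

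Set $a_I := |(f_1, h_I)|$ and $b_J := |(f_2, h_J)|$. Since each $I$ has only the two children $J_+(I), J_-(I)$ and $|c_{IJ}|\le 1$, Cauchy--Schwarz over the children gives
\[
\sum_{J\subset I,\;|J|=|I|/2} 2^{-1/2}|c_{IJ}|\,b_J \;\le\; B_I := \bigl(b_{J_+(I)}^2+b_{J_-(I)}^2\bigr)^{1/2},
\]
so $(SH_1 f_1, f_2) \le \sum_I a_I\,B_I$. A second Cauchy--Schwarz in $I$ with weight $\langle\sigma\rangle_I$ gives
\[
\sum_I a_I\,B_I \;\le\; \Bigl(\sum_I \frac{a_I^2}{\langle\sigma\rangle_I}\Bigr)^{1/2}\Bigl(\sum_I B_I^2\,\langle\sigma\rangle_I\Bigr)^{1/2}.
\]

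Next I would estimate each of the two factors by a weighted Haar embedding of the form $\sum_I |(f, h_I)|^2\,\nu_I \le C\,\QQ\,\|f\|^2$. The tool is the disbalanced Haar decomposition $h_I = \alpha_I h_I^\sigma + \beta_I\,\chi_I/\sqrt{|I|}$, where $h_I^\sigma$ is the $\sigma$-adapted (mean-zero in $\sigma$) Haar on $I$. Splitting $(f, h_I)$ along this decomposition yields a weighted-Haar piece controlled by Parseval in the relevant weighted space, plus a paraproduct piece that is handled by the weighted Carleson embedding theorem. The $A_2$ hypothesis enters through the Carleson constant of the paraproduct coefficients via the identity $\langle w\rangle_I\,\langle\sigma\rangle_I \le \QQ$. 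This yields the two key estimates
\[
\sum_I \frac{|(f_1,h_I)|^2}{\langle\sigma\rangle_I} \;\le\; C\,\QQ\,\|f_1\|_w^2, \qquad \sum_I B_I^2\,\langle\sigma\rangle_I \;=\; \sum_J |(f_2,h_J)|^2\,\langle\sigma\rangle_{\widehat J} \;\le\; C\,\QQ\,\|f_2\|_\sigma^2,
\]
where $\widehat J$ is the dyadic parent of $J$ and the second bound comes from the symmetric, $w\leftrightarrow\sigma$ version of the first. Multiplying the two square-root estimates yields the claimed $C\,\QQ\,\|f_1\|_w\,\|f_2\|_\sigma$.

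The main obstacle is the $A_2$--Carleson estimate $\sum_{I\subset I_0}\beta_I^2\,\sigma(I) \le C\,\QQ\,\sigma(I_0)$ on the paraproduct coefficients produced by the disbalanced Haar expansion; this Carleson lemma is the linchpin that converts the $A_2$ hypothesis into a summable form. Once that lemma is in hand, the rest of the proof is a mechanical cascade of Cauchy--Schwarz and Parseval in the appropriate weighted Haar bases.
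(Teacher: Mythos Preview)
Your approach is quite different from the paper's Bellman-function iteration, but there is a genuine gap in the second embedding. You claim
\[
\sum_J |(f_2,h_J)|^2\,\langle\sigma\rangle_{\widehat J}\ \le\ C\,[w]_{A_2}\,\|f_2\|_\sigma^2
\]
follows from the ``symmetric, $w\leftrightarrow\sigma$ version'' of the first embedding. But the symmetric version of your first estimate is
\[
\sum_J \frac{|(f_2,h_J)|^2}{\langle w\rangle_J}\ \le\ C\,[w]_{A_2}\,\|f_2\|_\sigma^2,
\]
and to pass from $1/\langle w\rangle_J$ to $\langle\sigma\rangle_{\widehat J}$ you must use $\langle\sigma\rangle_{\widehat J}\le [w]_{A_2}/\langle w\rangle_{\widehat J}\le 2[w]_{A_2}/\langle w\rangle_J$. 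That costs an \emph{extra} factor of $[w]_{A_2}$; the parent average $\langle\sigma\rangle_{\widehat J}$ is not uniformly comparable to $\langle\sigma\rangle_J$ (only up to the $A_2$ doubling constant). After square roots your argument therefore yields $[w]_{A_2}^{3/2}$, not the linear bound.

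The scheme is salvageable if you reverse the weight in Cauchy--Schwarz: split $\sum_I a_I B_I$ with weight $\langle w\rangle_I$ instead of $\langle\sigma\rangle_I$. Then the first factor becomes $\sum_I |(f_1,h_I)|^2\langle w\rangle_I=\|S_d f_1\|_{L^2(w)}^2\le C[w]_{A_2}\|f_1\|_w^2$ (the sharp dyadic square-function bound), while the second factor is $\sum_J |(f_2,h_J)|^2/\langle w\rangle_{\widehat J}$; now the parent is in the \emph{denominator}, and the trivial inequality $\langle w\rangle_{\widehat J}\ge\tfrac12\langle w\rangle_J\ge\tfrac12\,\langle\sigma\rangle_J^{-1}$ reduces this to $2\sum_J|(f_2,h_J)|^2\langle\sigma\rangle_J\le C[w]_{A_2}\|f_2\|_\sigma^2$. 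This does give the linear bound, but note that you are then invoking the sharp $[w]_{A_2}^{1/2}$ square-function estimate as a black box---a result of essentially the same depth as the Wittwer/NTV inequality the paper uses to build its Bellman function. The paper's route instead derives everything from a single Bellman function $B_Q$ satisfying $-d^2B_Q\ge|dx|\,|dy|$, and the whole point of the proof there is the combinatorial case analysis showing that the two-generation difference $B_{40Q}(b)-\tfrac14\sum_{ij}B_{40Q}(b_{ij})$ dominates $|\alpha|(|\delta_1|+|\delta_2|)$; your Cauchy--Schwarz shortcut bypasses that analysis entirely, at the price of importing the sharp square-function bound.
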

\begin{proof}
Let
$$
Q:=\QQ\,.
$$
We know from \cite{NTV-2w}, \cite{Wit} that
\begin{thm}
\label{BQ}
There exists a function $B_Q$ of $6$ variables $(X,Y,x,y,u,v)$ defined in $\Omega_Q:=\{(X,Y,x,y,u,v)>0: x^2 \le Xv, y^2\le Yu, 1\le uv \le Q\}$ such that
\begin{equation}
\label{B1}
B_Q(X,Y,x,y,u,v)\le C\, Q\, (X+Y)\,,
\end{equation}
and
\begin{equation}
\label{B2}
d^2\,B_Q(X,Y,x,y,u,v)\ge |dx||dy|\,.
\end{equation}
\end{thm}

In particular, one can conclude that having two points $a= (a_1,...,a_6),\, b=(b_1,...,b_6)$ in $\Omega_Q$ connected by  segment $[a,b]$ {\bf lying entirely inside} $\Omega_Q$ one can introduce the parametrization $c(t)=at +b(1-t)$, consider
$$
q(t)= B_Q(c(t))
$$
and claim, using \eqref{B2} that
\begin{equation}
\label{B3}
- q''(t) \ge |a_3-b_3||a_4-b_4|\,.
\end{equation}

We will need the same thing for some other segments $[a,b]$ {\bf not lying entirely inside} $\Omega_Q$ (but with $a,b\in \Omega_Q$).

The problem is of course that $\Omega_Q$ is not convex.

Now let us apply $B_{40Q}$.
 We choose $I$ and put
 $$
 b:= (\langle f_1^2w\rangle_I, \langle f_2^2\sigma\rangle_{I}, \langle f_1\rangle_I, \langle f_2\rangle_{I}, \langle w\rangle_{I}, \langle \sigma\rangle_I)\,,
 $$
 $$
 b_{+}=  (\langle f_1^2w\rangle_{I_+}, \langle f_2^2\sigma\rangle_{I_+}, \langle f_1\rangle_{I_+}, \langle f_2\rangle_{I_+}, \langle w\rangle_{I_+}, \langle \sigma\rangle_{I_+})\,,
 $$
$$
 b_{-}= (\langle f_1^2w\rangle_{I_-}, \langle f_2^2\sigma\rangle_{I_-}, \langle f_1\rangle_{I_-}, \langle f_2\rangle_{I_-}, \langle w\rangle_{I_-}, \langle \sigma\rangle_{I_-})\,,
 $$
 $$
  b_{ij}= (\langle f_1^2w\rangle_{I_{ij}}, \langle f_2^2\sigma\rangle_{I_{ij}}, \langle f_1\rangle_{I_{ij}}, \langle f_2\rangle_{I_{ij}}, \langle w\rangle_{I_{ij}}, \langle \sigma\rangle_{I_{ij}})\,,
  $$
 where $i,j=\pm$.

 We want to estimate from below
 $$
D:= B_{40Q}(b) -\frac14 (\sum_{i,j=\pm} B_{40Q}(b_{ij}) =A+B+C\,,
 $$
 where
 $$
 A:=  B_{40Q}(b) -\frac12(B_{40Q}(b_+ )+ B_{40Q}(b_-))\,,
 $$
 $$
 B:=\frac12 (B_{40Q}(b_+) -\frac12(B_{40Q}(b_{++} )+ B_{40Q}(b_{+-})))\,,
 $$
 $$
 C:=\frac12 (B_{40Q}(b_-) -\frac12(B_{40Q}(b_{-+} )+ B_{40Q}(b_{--})))\,.
 $$
 Let $b=(\cdot, \cdot, x,y, \cdot,\cdot)$, $b_{+}=(\cdot, \cdot, x+\al,y+\la, \cdot, \cdot)$, $b_{-}=(\cdot, \cdot, x-\al,y-\la, \cdot, \cdot)$,

$$
b_{++}=(\cdot, \cdot, x+\al+\beta_1,y+\la+\delta_1, \cdot, \cdot)\,, b_{+-}=(\cdot, \cdot, x+\al-\beta_1,y+\la-\delta_1, \cdot,\cdot)\,,
$$
$$
b_{-+}=(\cdot, \cdot, x-\al+\beta_2,y-\la+\delta_2, \cdot,\cdot)\,,
b_{--}=(\cdot, \cdot, x-\al-\beta_2,y-\la-\delta_2, \cdot,\cdot)\,.
$$
We do not know the signs of $\al,\la, \beta_1,\beta_2, \delta_1,\delta_2$.

We want to show that there exists an absolute positive constant $c$ such that

\begin{equation}
\label{c}
D\ge c\, |\al|(|\delta_1|+|\delta_2|)\,.
\end{equation}

 Consider several cases. First of all notice that not only all $b, b_-, b_+, b_{ij}$ are in $\Om_Q$ but the segments $[b, b_{ij}]$ are in $\Om_{40Q}$. This follows from the following geometric lemma.
\begin{lm}
\label{geomlerez}
Let three point $A, B, C$ be in $\Om_Q$ and let $M=\frac{A+B}{2}$. Assume $[A, B]\subset \Om_Q$ and $[C, M]\subset \Om_Q$. Then $[C,A], [C,B]\subset \Om_{40Q}$.
\end{lm}
\begin{proof}
Let's prove the statement for $[C, A]$. 
\paragraph{Case 1: $C_{1}\leqslant A_{1}$, $C_{2}\leqslant A_{2}$} Then there is nothing to prove, since if we have a line segment with positive slope, who's endpoints are in $\Om_Q$, then the whole segment lies in $\Om_Q$.

\paragraph{Case 2: $C_{1}\geqslant A_{1}$ or $C_{2}\geqslant A_{2}$} 
Without loss of generality, assume $C_{1}\geqslant A_{1}$. 

Denote $S=\frac{A+C}{2}$ --- the middle of $[A, C]$. Denote also $O=[C, M]\cap [B,S]$. Since $[C, M]$ and $[B,S]$ are two medians of the triangle $ABC$, we have that $O$ is the center of $ABC$. Therefore,
\begin{align}
O=\frac{1}{3}B+\frac{2}{3}S,\\
O=\frac{1}{3}C+\frac{2}{3}M.
\end{align}
\begin{center}
\includegraphics[width=1\linewidth]{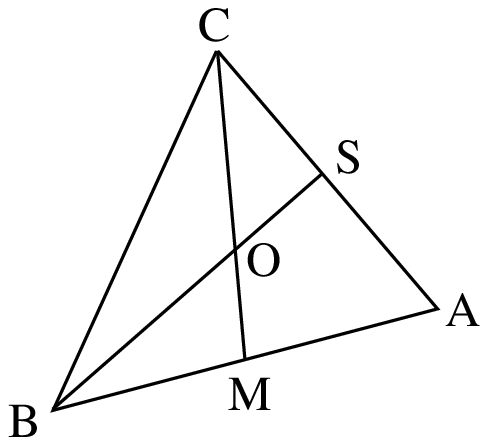}
\end{center}
Therefore, for $k\in \{1,2\}$ we have
\begin{align}
O_{k}\geqslant \frac{2}{3}M_{k},\\
O_{k}\geqslant \frac{1}{3}C_{k}.
\end{align}
On the other hand, 
$$
S_{1}=\frac{A_{1}+C_{1}}{2}\leqslant C_{1}\leqslant 3O_{1}.
$$
Therefore,
$$
S_{1}S_{2}\leqslant 3O_{1}\cdot \frac{3}{2}O_{2}=\frac{9}{2}O_{1}O_{2}.
$$
But $O\in [C,M]\subset \Om_Q$, so
$$
S_{1}S_{2}\leqslant \frac{9}{2}Q.
$$
Therefore, $S\in \Om_{\frac{9}{2}Q}$, and so are $A$ and $C$. Thus, $[A, C]\in \Om_{40Q}$, which finishes the proof.
\end{proof}
The statement for segments $[b, b_{ij}]$ follows from this lemma. Indeed, we have a triangle $bb_{++}b_{+-}$ such that $[b_{++}, b_{+-}]\subset \Om_{Q}$ and, moreover, since endpoints and the middle of the line segment $b_{-}bb_{+}$ are in $\Om_{Q}$, we conclude that $[b,b_{+}]\in \Om_{2Q}$. Therefore, the median of mentioned triangle is in $\Om_{2Q}$, thus, all sides are in $\Om_{40Q}$. 
\begin{center}
\includegraphics[width=1\linewidth]{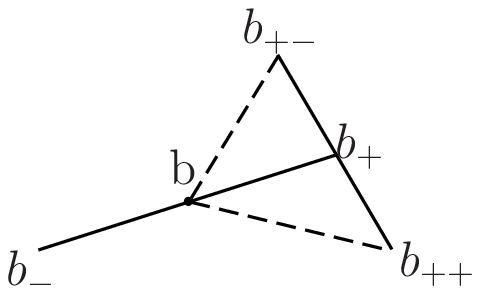}
\end{center}
 \begin{lm}
 \label{geomle}
 Let points $P$, $P_i, i=1,2,3,4$ be in $\Om_Q$ and $P$ be a baricenter of $P_i$. Then all segments $[P,P_i]$ are in $\Om_{40Q}$.
 \end{lm}

 Now fix $i,j$, say, $i=+, j=-$. Consider function
 $$
 f_{+-}(t)= B_{40Q}( t b_{+-} + (1-t) b)
 $$
 and write
 $$
 f_{+-}(0)-f_{+-}(1) =- f'(0) -\frac12 f''(\xi)= -\nabla B_{40Q}(b)\cdot (b_{+-}-b) +\frac12 |x+\al-\beta_1-x||y+\la-\delta_1-y|\,.
 $$
 This is because of Theorem \ref{BQ}.

 We do this for all $f_{ij}, $ $i=\pm, j=\pm$, add and divide by $4$. Then we get the first estimate on $D$:
 $$
 D\ge -\nabla B_{40Q}(b)\cdot (\frac14 (b_{+-}+b_{++}+b_{--}+b_{-+})-b)
 $$
 \begin{equation}
 \label{D1}
 +\frac12 ((|\al-\beta_1||\la-\delta_1|+|\al+\beta_1||\la+\delta_1|) + (|\al-\beta_2||\la-\delta_2|+|\al+\beta_2||\la+\delta_2|))\,.
 \end{equation}

 The first term is zero. If we have the case that $|\beta_1|\le \frac12 |\al|$ and $|\beta_2|\le \frac12 |\al|$, then we get from the first bracket of the second term at least
 $|\al||\delta_1|$, and from the second bracket at least  $|\al||\delta_1|$. In this case \eqref{c} is proved.

 Suppose now that $|\beta_1|\ge \frac12 |\al|$ and $|\beta_2|\ge \frac12 |\al|$. Then we notice that $D=A+B+C$. Moreover, $A\ge0$ as $B_{40Q}$ is concave, and $[b_-, b_+]\subset \Om_{40Q}$
 (see Lemma \ref{geomle}), point $b$ being the center of this segment.
 On the other hand, by Theorem \ref{BQ}
 $$
 2B\ge B_{40Q}(b_+) -\frac12(B_{40Q}(b_{++} )+ B_{40Q}(b_{+-})) \ge c\, |\beta_1||\delta_1|\ge \frac{c}2 |\al | |\delta_1|
 $$
 by our assumption. Symmetrically we will have

 $$
 2c\ge B_{40Q}(b_-) -\frac12(B_{40Q}(b_{-+} )+ B_{40Q}(b_{--})) \ge c\, |\beta_2||\delta_2|\ge \frac{c}2 |\al | |\delta_2|\,.
 $$
 Combining the last two inequalities we also have
 $$
 D= A+B+C \ge c'\,|\al| (|\delta_1|+|\delta_2|)\,,
 $$
 which is \eqref{c} we want.

 Now suppose $|\beta_1|\le \frac12 |\al|$ and $|\beta_2|\ge \frac12 |\al|$.  Then we write $2D= D +A+B+C$. We estimate $D$ by  \eqref{D1}, omitting the the second (positive) bracket of the second term, and writing for the first bracket of the second term the following estimate:
 $$
 D\ge |\al||\delta_1|\,.
 $$
 We again use that $A\ge 0$ and also use that $B\ge 0$ by the same concavity and the fact that $[b_{+-}, b_{++}]\subset \Om_{40Q}$.

 On the other hand, by Theorem \ref{BQ}
 $$
 2C\ge B_{40Q}(b_-) -\frac12(B_{40Q}(b_{-+} )+ B_{40Q}(b_{--})) \ge c\, |\beta_2||\delta_2|\ge \frac{c}2 |\al | |\delta_2|
 $$
 by our assumption $|\beta_2|\ge \frac12 |\al|$. Now combining  $2D= D +A+B+C$ and the last two inequalities we get \eqref{c}.

 We are left with the fourth case: $|\beta_1|\ge \frac12 |\al|$ and $|\beta_2|\le \frac12 |\al|$. But it is totally symmetric to the previous case. So \eqref{c} is always proved.

 \bigskip

 Now we repeat the usual Bellman function summation over dyadic tree (we have above the inequality for the node $I$, we repeat it for nodes $I_+,I_-$ et cetera). In other words we use integration of discrete Laplacian and discrete Green's formula to get (we use also \eqref{B1} of course):

\begin{equation}
\label{sum1}
\frac1{|I|}\sum_{J\subset I}  |L| |\Delta_J f_1|(|\Delta_{J_-} f_2|+\Delta_{J_+}f_2|)\le C\,40\,Q\,(\langle f^2w\rangle_I +\langle g^2\sigma\rangle_{I})\,.
\end{equation}

Our Theorem \ref{interiorshift1}is completely proved.

\end{proof}

\section{Points over i's}
\label{i}

We gave a simple proof of linear estimate of any shift of complexity $1$. So, for example, it gives the way to deduce Stefanie's result from \cite{NTV-2w}. Below we give a very simple proof of \cite{Wit}.   This is up to the existence of $B_Q$. In the next section we give a proof of such an existence.

\section{The heart of the matter: a reduction to bilinear embedding estimate}
\label{bilin}

To prove Theorem \ref{BQ} we need a key inequality. It is an inequality established by Wittwer \cite{Wit} (see also \cite{NTV-2w} on which ]cite{Wit} is based).
\begin{equation}
\label{key}
\sum_{ I} |(\phi w, h_I)||(\psi \sigma,h_I)|\le C\QQ\,\|\phi\|_w\|\psi\|_{\sigma}\,.
\end{equation}

In fact, if \eqref{key} is proved we just put
$$
B_Q (X,Y,x,y,u,v):=\sup\{\frac1{|J|}\sum_{ I\subseteq J} |(\phi w, h_I)||(\psi \sigma,h_I)|: \langle \phi^2 w\rangle_I=X, \langle \psi^2 \sigma\rangle_I=Y,
$$
$$
 \langle \phi \rangle_I=x,  \langle \psi \rangle_I=y,  \langle  w\rangle_I=u,  \langle \psi\rangle_I=v\}\,.
$$

All properties \eqref{B1}--\eqref{B3} can be easily checked as soon as \eqref{key} is proved. We give here an easy proof of \eqref{key}--considerably easier than in \cite{Wit}.
\begin{lm}
\label{hw}
Below $I$'s are dyadic intervals. We have the following decomposition:
$$
h_I = \al_I h_I^w + \beta_I \frac{\chi_I}{\sqrt{I}}\,,
$$
where

1)  $|\alpha_I| \le \sqrt{\langle w\rangle_I}$,

2)$ |\beta_I| \le \frac{|\Delta_I w|}{ \langle \phi\rangle_{I}}$,

3) $\{h_I^w\}_{I} $ is an orthonormal basis in $L^2(w)$,

4) $h_I^w$ assumes on $I$ two constant values, one on $I_+$ and another on $I_-$.
\end{lm}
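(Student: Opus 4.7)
The space of functions on $I$ that are constant on each half $I_{\pm}$ is two-dimensional, so the whole statement is essentially $2\times 2$ linear algebra once I choose $h_I^w$ correctly. The plan is to first construct $h_I^w$ by hand, then verify orthonormality in $L^2(w)$, then solve for $\alpha_I,\beta_I$ by matching values on $I_\pm$.

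I would define $h_I^w := A_I\chi_{I_+} + B_I\chi_{I_-}$, picking $A_I,B_I$ so that (a) $\int_I h_I^w\,w\,dx = 0$ (orthogonality to constants in $L^2(w)$), and (b) $\int_I (h_I^w)^2 w\,dx = 1$. Property 4) is then immediate. Condition (a) forces $B_I = -A_I\langle w\rangle_{I_+}/\langle w\rangle_{I_-}$, and then (b), together with $|I_\pm|=|I|/2$ and $\langle w\rangle_{I_+}+\langle w\rangle_{I_-} = 2\langle w\rangle_I$, yields
$$A_I = \sqrt{\frac{\langle w\rangle_{I_-}}{|I|\,\langle w\rangle_{I_+}\langle w\rangle_I}},\qquad B_I = -\sqrt{\frac{\langle w\rangle_{I_+}}{|I|\,\langle w\rangle_{I_-}\langle w\rangle_I}}.$$
For property 3) I would use the standard dyadic dichotomy: two distinct dyadic intervals are either disjoint (trivial) or nested; if $J\subsetneq I$, then $h_J^w$ is supported in some $I_\pm$, where $h_I^w$ is a constant $c$, so $\int h_I^w h_J^w\,w\,dx = c\int h_J^w\,w\,dx = 0$ by construction.

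Now for the decomposition of $h_I$: since $h_I$, $h_I^w$, and $\chi_I/\sqrt{|I|}$ are all supported in $I$ and constant on $I_\pm$, matching values on $I_+$ and $I_-$ gives a $2\times 2$ system
$$\tfrac{1}{\sqrt{|I|}} = \alpha_I A_I + \tfrac{\beta_I}{\sqrt{|I|}},\qquad -\tfrac{1}{\sqrt{|I|}} = \alpha_I B_I + \tfrac{\beta_I}{\sqrt{|I|}}.$$
Subtracting gives $\alpha_I(A_I-B_I) = 2/\sqrt{|I|}$; plugging in the formulas above and simplifying yields
$$\alpha_I = \frac{\sqrt{\langle w\rangle_{I_+}\langle w\rangle_{I_-}}}{\sqrt{\langle w\rangle_I}},\qquad \beta_I = -\frac{\langle w\rangle_{I_+}-\langle w\rangle_{I_-}}{2\langle w\rangle_I}.$$
Bound 1) is then just AM--GM on $\langle w\rangle_{I_\pm}$: $\sqrt{\langle w\rangle_{I_+}\langle w\rangle_{I_-}}\le \tfrac12(\langle w\rangle_{I_+}+\langle w\rangle_{I_-}) = \langle w\rangle_I$, giving $|\alpha_I|\le\sqrt{\langle w\rangle_I}$. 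Bound 2) follows directly from the explicit formula for $\beta_I$, interpreting $\Delta_I w = \langle w\rangle_{I_+}-\langle w\rangle_{I_-}$ (and reading the denominator in 2) as $\langle w\rangle_I$, which appears to be a typo in the statement).

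The hard part is essentially nonexistent: once $h_I^w$ is written down, everything reduces to computation. The only places to be careful are (i) the sign/orientation convention for the standard Haar $h_I$, which affects the sign of $\beta_I$ but not any of the absolute-value bounds, and (ii) parsing the notation $\Delta_I w$ (for which the normalization above is the natural choice consistent with the claimed estimate).
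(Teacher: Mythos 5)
Your proof is correct and follows the standard construction of the weighted Haar system. The paper actually states Lemma~\ref{hw} without giving a proof, so there is no argument in the text to compare against; the argument you give (define $h_I^w$ by requiring $w$-mean zero and $L^2(w)$-normalization, observe it is constant on halves of $I$, then solve the resulting $2\times 2$ system for $\alpha_I,\beta_I$ and bound them by AM--GM and the explicit formulas) is exactly the expected one. A couple of small points worth flagging.

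First, the sign of $\beta_I$: with the orientation $h_I = +|I|^{-1/2}$ on $I_+$ and $-|I|^{-1/2}$ on $I_-$, solving your two displayed equations gives $\beta_I = +\frac{\langle w\rangle_{I_+}-\langle w\rangle_{I_-}}{2\langle w\rangle_I}$, i.e.\ the opposite sign to what you wrote; this is only a convention on $h_I$ and, as you note, is irrelevant for the estimate $|\beta_I|\le |\Delta_I w|/\langle w\rangle_I$. Your reading of item 2) as having $\langle w\rangle_I$ (rather than $\langle\phi\rangle_I$) in the denominator is indeed the correct reading --- this is confirmed by every subsequent use of the lemma in Section~\ref{bilin}, where the factor $|\Delta_I w|/\langle w\rangle_I$ appears. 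Second, and this is the only genuine (if minor) gap: you verify that $\{h_I^w\}_I$ is an orthonormal \emph{system} via the dyadic dichotomy, but item 3) claims it is an orthonormal \emph{basis}. Completeness needs a short extra argument --- e.g.\ that for $f\in L^2(w)$, the dyadic $w$-martingale $\mathbb{E}^w_n f$ converges to $f$ in $L^2(w)$, and the increments $\mathbb{E}^w_{n+1}f - \mathbb{E}^w_n f$ lie in the closed span of $\{h_I^w : |I|=2^{-n}\}$. This is standard, but a complete proof should say it.
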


\vspace{.2in}

We  write

$$
\sum_{ I} |(\phi w, h_I)||(\psi \sigma,h_I)|\le
$$
$$
\sum_{ I}  |(\phi w, h^w_I)|\sqrt{\langle w \rangle_I}|(\psi \sigma,h^{\sigma}_I)|\sqrt{\langle \sigma \rangle_I}\,+
$$
$$
\sum_{I}  |\langle \phi w\rangle_I\frac{|\Delta_I w|}{\langle w \rangle_I}|(\psi \sigma,h^{\sigma}_I)|\sqrt{\langle \sigma \rangle_I}\sqrt{I}\,+
$$
$$
\sum_{I}  |\langle \psi \sigma\rangle_I\frac{|\Delta_I \sigma|}{\langle \sigma \rangle_I}|(\phi w,h^{w}_I)|\sqrt{\langle w \rangle_I}\sqrt{I}\,+
$$
$$
\sum_{I} |\langle \phi w\rangle_I|\langle \psi \sigma\rangle_I \frac{|\Delta_I w|}{\langle w \rangle_I} \frac{|\Delta_I \sigma|}{\langle \sigma \rangle_I}\sqrt{I}\sqrt{I}=: I + II +III +IV\,.
$$

Obviously
\begin{equation}
\label{I}
I\le C\QQ^{1/2} \|\phi\|_w\|\psi\|_{\sigma}\,.
\end{equation}

Terms $II, II$ are symmetric, so consider $II$.
Using Bellman function one can prove now that

\begin{equation}
\label{II}
II\le C\QQ \|\phi\|_w\|\psi\|_{\sigma}\,.
\end{equation}
\begin{equation}
\label{III}
III\le C\QQ \|\phi\|_w\|\psi\|_{\sigma}\,.
\end{equation}

If we do the same in $IV$ by using Cauchy's inequality, we would get
$$
IV\le C\QQ^{3/2}\|\phi\|_{w} \|\psi\|_{\sigma}\,,
$$
which is not our coveted linear estimate. So $I, II, II$ are fine and linear estimate of exterior sum $\overline{\sigma_{11e}}$ is equivalent to the linear estimate of $IV$.

\section{Carleson measures built on $w\in A_2$ and their estimates}
\label{carl}

Let us introduce  {\bf bi-sublinear} sum
$$
B(\phi w, \psi \sigma) :=\sum_{ I}  |\langle \phi w\rangle_I||\langle \psi \sigma\rangle_I| \frac{|\Delta_I w|}{\langle w \rangle_I} \frac{|\Delta_I \sigma|}{\langle \sigma \rangle_I}|I|\,.
$$
Everything is reduced to the estimate of this  {\bf bi-sublinear} sum.

We can rewrite it as
\begin{equation}
\label{Ltrick}
\sum_{I} \frac{|\langle \phi w \rangle_I|}{\langle w \rangle_I} \frac{|\langle \psi \sigma\rangle_I|}{\langle \sigma \rangle_I} |\Delta_I w ||\Delta_I \sigma| |I| \le \QQ\,\|\phi\|_{L^2(L,\sigma)}\|\psi\|_{L^2(L,\sigma)}\,.
\end{equation}

This is immediately reductive to Carleson measure estimate. In fact, the LHS of \eqref{Ltrick} can be rewritten as
\begin{equation}
\label{Carltrick}
\sum_{ I} \frac{|\langle \phi w \rangle_I|}{\langle  w \rangle_I} \frac{|\langle \psi \sigma\rangle_I|}{\langle \sigma \rangle_I} |\Delta_I w ||\Delta_I \sigma| |I| \le B\, \int_L M_w\phi (x) M_{\sigma} \psi  (x) dx\,,
\end{equation}
where $B$ is the Carleson norm of the measure given by the formula
\begin{equation}
\label{kmeas}
\al_I=|\Delta_I w ||\Delta_{I} (\sigma)| |I|\,.
\end{equation}
In fact, \eqref{Carltrick} is a simple geometric argument: exercise!

But the RHS of \eqref{Carltrick}  is estimated by Cauchy inequality independently of $\QQ$ (we learnt this other trick from \cite{CUMP1}):
$$
\int M_w \phi (x)M_{\sigma} \psi  (x) dx = \int M_w \phi (x) M_{\sigma} \psi  (x)\sqrt{w(x)}\sqrt{\sigma(x)} dx\le
$$
$$
 \|M_w \phi\|_w \|M_{\sigma}\chi_L\|_{\sigma} \le A\,\|\phi\|_{L^2(w)}\|\psi\|_{L^2(\sigma)}\,.
$$
Combining this with \eqref{Carltrick} we obtain that everything follows from

\begin{thm}
\label{kCARL}
$$
\|\{\alpha_I\}_I\|_{\text{Carl}} \le  A\, \QQ\,.
$$
\end{thm}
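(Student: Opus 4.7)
My plan is a short Bellman function argument on the two--dimensional domain $\Om_Q^{\mathrm{2D}} := \{(u,v):\,u,v>0,\ 1 \le uv \le Q\}$, using the explicit power $B(u,v) := (uv)^{1/3}$ in the role played by $B_Q$ in Section~1. (Any exponent in $(0,1/2)$ would do; the point is that $1/2$ is strictly excluded.) Two properties of $B$ are what make the method run. First, trivially, $0 \le B \le Q^{1/3}$ on $\Om_Q^{\mathrm{2D}}$. Second, a direct Hessian computation gives the pointwise lower bound
\[
-d^2B\bigl((du,dv)\bigr) \;=\; \frac{2}{9(uv)^{5/3}}\bigl(v^2\,du^2 - uv\,du\,dv + u^2\,dv^2\bigr) \;\ge\; \frac{2\,|du|\,|dv|}{9\,(uv)^{2/3}},
\]
where AM--GM on $v^2 du^2 + u^2 dv^2 \ge 2uv\,|du|\,|dv|$ absorbs the sign-problematic mixed term. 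Because $B$ is concave on the whole open first quadrant, I do not have to worry about segments escaping the (non-convex) region $\Om_Q^{\mathrm{2D}}$.

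Next, fix a dyadic $J$ and, for each $I \subseteq J$, abbreviate $(u,v) = (\langle w\rangle_I, \langle \sigma\rangle_I)$ and $(u_\pm, v_\pm) = (\langle w\rangle_{I_\pm}, \langle \sigma\rangle_{I_\pm})$. By (the reasoning behind) Lemma~\ref{geomle}, the segment joining $(u_+,v_+)$ to $(u_-,v_-)$ lies inside $\Om_{cQ}^{\mathrm{2D}}$ for an absolute constant $c$, so $(uv)^{2/3} \le (cQ)^{2/3}$ along it. Integrating $-f''$ along that segment (with $f(t)$ the restriction of $B$) then yields the discrete concavity
\[
B(u,v) - \tfrac12\bigl(B(u_+,v_+) + B(u_-,v_-)\bigr) \;\ge\; \frac{c'}{Q^{2/3}}\,|\Delta_I w|\,|\Delta_I \sigma|.
\]

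Finally, multiply by $|I|$, sum over $I \subseteq J$, and apply the discrete Green's formula (using $|I_+| + |I_-| = |I|$ and $B \ge 0$ to discard the terms at the leaves of the truncated tree). The left-hand side telescopes to at most $B(\langle w\rangle_J, \langle \sigma\rangle_J)\,|J| \le Q^{1/3}\,|J|$, giving
\[
\sum_{I \subseteq J}|\Delta_I w|\,|\Delta_I \sigma|\,|I| \;\le\; \frac{Q^{2/3}}{c'}\cdot Q^{1/3}\,|J| \;=\; A\,Q\,|J|,
\]
which is precisely the Carleson bound $\|\{\alpha_I\}_I\|_{\mathrm{Carl}} \le A\,\QQ$ claimed in Theorem~\ref{kCARL}. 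The delicate point is the Hessian estimate: the naive Cauchy--Schwarz route via $\|\log w\|_{\mathrm{BMO}^d} \lesssim \log Q$ loses a logarithm and yields only $Q\log Q$. Taking the exponent $\alpha$ strictly below $1/2$ keeps the Hessian of $(uv)^\alpha$ strictly negative definite, so AM--GM delivers $|du|\,|dv|$ rather than just $(du)^2+(dv)^2$, which is exactly what buys the sharp linear dependence on $Q$.
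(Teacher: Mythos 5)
Your argument is correct, but it takes a genuinely different route than the paper's. The paper disposes of Theorem~\ref{kCARL} in two lines by quoting a lemma from \cite{VaVo1}: if $\langle u\rangle_I\langle v\rangle_I\le 1$ for all dyadic $I$, then $\frac1{|L|}\sum_{I\subset L}|\Delta_I u||\Delta_I v|\,|I|\le A\sqrt{\langle u\rangle_L\langle v\rangle_L}$. Applying this to the rescaled pair $u=w/Q,\ v=\sigma$ and using $\sqrt{\langle w\rangle_L\langle\sigma\rangle_L}\le\sqrt{Q}$ immediately gives the Carleson bound $A\,Q\,|L|$. You instead carry out the Bellman machinery by hand with the explicit function $B(u,v)=(uv)^{1/3}$ on $\{1\le uv\le Q\}$: your Hessian identity and the AM--GM absorption of the cross term are correct, as is the observation that $B$ is concave on the whole quadrant and the bound $0\le B\le Q^{1/3}$; the discrete concavity plus Green's-formula telescoping then gives $(cQ)^{2/3}\cdot Q^{1/3}\lesssim Q$, exactly linear. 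The trade-off is presentational: the paper delegates the entire Bellman argument to a citation and only needs the scaling trick, while your version is longer but completely self-contained and makes transparent why the linear bound appears (the exponent $\alpha<1/2$ keeps $-d^2(uv)^\alpha$ strictly positive definite, yielding $|du|\,|dv|$ rather than the degenerate square one gets at $\alpha=1/2$). One small imprecision: Lemma~\ref{geomle} as stated concerns a barycenter of four points, which is not literally your configuration; the fact you actually use --- that a segment with both endpoints and its midpoint in $\Om_Q$ stays in $\Om_{cQ}$ --- follows from a one-line quadratic interpolation bound on $t\mapsto u(t)v(t)$ (since $|L_0|,|L_{1/2}|,|L_1|\le 1$ on $[0,1]$, giving $c\le 3$), or from the discussion following Lemma~\ref{geomlerez}, and it would be cleaner to say so explicitly.
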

\begin{proof}
In the paper \cite{VaVo1} it is shown that
if for all $I\in D$ we have that two positive functions $u,v$ satisfy
$$
\langle u \rangle _I\langle v\rangle_I \le 1
$$
then
for any $L\in D$ we also have
$$
\frac1{|L|}\sum_{I\in D, I\subset L} |\Delta_I u||\Delta_I v| |I| \le A\, \sqrt{\langle u\rangle_L\langle v\rangle_L}\,.
$$
Take our $w\in A_2$ and put $u= w/\QQ, v=\sigma$. Then the assumption is satisfied, and we immediately get
$$
\sum_{I\in D, I\subset L} |\Delta_I w||\Delta_I \sigma| |I|\le A\, \QQ^{1/2} \sqrt{\langle w\rangle_L\langle \sigma\rangle_L}|L|\,.
$$
In particular, we obtain
$$
\sum_{I\in D, I\subset L} |\Delta_I w||\Delta_I \sigma| |I|\le A\, \QQ\,|L|\,.
$$
This is exactly \eqref{kCARL} for measure $\{\al_I\}_I$!
\end{proof}

\end{document}